\documentclass{article}
\usepackage{amssymb, amsthm, amsmath, amsfonts}
\usepackage{geometry}
\usepackage[colorlinks=true,urlcolor=blue,linkcolor=red,citecolor=magenta]{hyperref}
\usepackage{cleveref}
\usepackage{hyperref}
\usepackage{framed}

\usepackage{xcolor}

\newtheorem{theorem}{Theorem}[section]
\newtheorem{lemma}[theorem]{Lemma}
\newtheorem{corollary}[theorem]{Corollary}
\newtheorem{proposition}[theorem]{Proposition}

\theoremstyle{definition}

\DeclareMathOperator{\wt}{wt} 

\DeclareMathOperator{\mult}{mult}

\newcommand{\graph}[1]{\mathcal{G}_{#1}}

\newcommand{\F}{\mathbb{F}}
\newcommand{\set}[1]{\left\{ #1 \right\}}

\newcommand{\parens}[1]{ \left( #1 \right)}

\newcommand{\floor}[1]{ \left\lfloor #1 \right\rfloor }

\newcommand{\maxmults}[1]{\mathcal{M}_{#1}}

\begin{document}
\title{A new upper bound for Sidon sets in $\mathbb{F}_2^{4k+3}$}
\author{Darrion Thornburgh}
\date{Department of Mathematics, Vanderbilt University, USA.
\texttt{darrion.thornburgh@vanderbilt.edu}\\[2ex]%
}
\maketitle

\begingroup
\renewcommand{\thefootnote}{}
\footnotetext{2020 Mathematics Subject Classification. Primary: 11B13; Secondary: 94B05, 94B65, 94D10.}
\addtocounter{footnote}{-1}
\endgroup

\begin{abstract}
A subset $S \subseteq \mathbb{F}_2^n$ is called a Sidon set if no four distinct points of $S$ have zero sum. It is shown that if $n \geq 7$ and $n \equiv 3 \mod 4$, then $|S|\leq 2^{\frac{n+1}{2}}-3$. As a consequence, for any even $t \geq 4$, there does not exist a binary linear $[2^t-3,2^t-2t-2,5]$-code, strengthening a nonexistence result of Brouwer and Tolhuizen from 1993. As a second consequence, we also show that for an even integer $n \geq 4$, every almost perfect nonlinear (APN) function $F \colon \mathbb{F}_2^n \to \mathbb{F}_2^n$ has nonlinearity at least $3$.
\end{abstract}

\section{Introduction}
A (binary) \textit{Sidon set} is a subset $S$ of $\F_2^n$ such that the equation $x+y=z+w$ is never satisfied for any distinct $x,y,z,w \in S$.
Equivalently, Sidon sets in $\F_2^n$ are those subsets containing no 2-dimensional affine subspaces, making the problem of determining the largest size of a Sidon set in $\F_2^n$ analogous to the famous \textit{cap set problem}, which is to determine the largest size of a subset of $\F_3^n$ containing no affine line \cite{croot2024pastfuturecapset}.
For the best-known constructions of large Sidon sets in $\F_2^n$, see \cite{CzerwinskiPottLargeSidon}.

Denote by $s_{\max}(\F_2^n)$ the largest size of a Sidon set in $\F_2^n$.
One can obtain the following trivial upper bound on $s_{\max}(\F_2^n)$ via a simple counting argument (see, for instance, \cite{CzerwinskiPott2024}):
\[
s_{\max}(\F_2^n) \leq 
\begin{cases}
    2^{\frac{n+1}{2}} & \text{if $n$ is odd},\\
    \floor{2^{\frac{n+1}{2}}+\frac{1}{2}} & \text{if $n$ is even}.
\end{cases}
\]
For the case of $n$ even, the first and only (at the time this paper was written) improvement on the trivial upper bound was given in \cite{CzerwinskiPott2024} by Czerwinski and Pott, who showed that if some technical conditions are satisfied, one can improve upon the trivial bound by $1$ or $2$.
For $n\geq 7$ odd, the trivial bound can be improved by a nonexistence result on binary linear codes from 1993 by Brouwer and Tolhuizen \cite{Brouwer1993}, providing $s_{\max}(\F_2^n)\leq 2^{\frac{n+1}{2}}-2$.
However, the upper bound on the size of Sidon sets in odd dimensions has not been improved since the result of Brouwer and Tolhuizen from 1993.
The main result of this paper is that for all $n \geq 7$ satisfying $n \equiv 3 \mod 4$, we improve the best known upper bound on the size of a Sidon set in $\F_2^n$ by exactly one.

\begin{theorem}\label{thm:main}
    Assume $n \geq 7$ and $n \equiv 3 \mod 4$.
    Then $s_{\max}(\F_2^n) \leq 2^{\frac{n+1}{2}}-3$.
\end{theorem}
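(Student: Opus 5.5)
Write $n=4k+3$ and $m=2^{(n+1)/2}=2^{2k+2}$. By the Brouwer--Tolhuizen bound we already have $s_{\max}(\F_2^n)\le m-2$. So it suffices to rule out a Sidon set $S\subseteq\F_2^n$ with $|S|=m-2$; any larger Sidon set contains one of this size.

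The plan is Fourier-analytic, working with the counting identity that gives the trivial bound. Let $f=1_S$ and for $u\in\F_2^n$ set $\hat f(u)=\sum_{x\in S}(-1)^{u\cdot x}$. Since $S$ is Sidon, every nonzero $d$ is represented as $x+y$ with $\{x,y\}\subseteq S$ at most once. Hence the additive energy $E(S)=\#\{(x,y,z,w)\in S^4: x+y=z+w\}$ equals exactly $3|S|^2-2|S|$. Parseval then gives
\[
\sum_{u\neq 0}\hat f(u)^4=2^nE(S)-|S|^4, \qquad \sum_{u\ne0}\hat f(u)^2=2^n|S|-|S|^2 .
\]
With $|S|=m-2$ and $2^n=m^2/2$, both right-hand sides are explicit polynomials in $m$.

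We then exploit integrality and congruences of the values $\hat f(u)$. Each $\hat f(u)=|S|-2|S\cap H_u|$, where $H_u$ is the complement of the hyperplane $u^\perp$. So $\hat f(u)\equiv |S|\equiv 2 \pmod 4$ exactly when $|S\cap H_u|$ is even. In the Sidon setting one also gets restrictions on $|S\cap H_u|$ by counting pairs $\{x,y\}\subseteq S$ whose sum lies in $u^\perp$ (these correspond to the zeros of $u$ on $S+S$). Combining this with the fact that the sums $x+y$ are distinct gives $\binom{|S|}{2}\le 2^n-1$ with slack only $m/2\cdot\text{const}$. This forces the distribution of the $\hat f(u)$ to be nearly that of a bent-like or plateaued function, concentrated at values $\pm 2^{k+1}\cdot c$ for small $c$.

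The key step is a variance argument. Let $N_j$ be the number of nonzero $u$ with $\hat f(u)=j$, and consider
\[
\sum_u \hat f(u)^2\bigl(\hat f(u)^2-a^2\bigr)\bigl(\hat f(u)^2-b^2\bigr)
\]
for well-chosen admissible values $a,b$ near $2^{k+1}\sqrt2$. The natural scale is $\sqrt{2^{n+1}}=2^{2k+2}$, and for $n\equiv3\pmod 4$ the number $2^{(n+1)/2}$ is an even power of two, $2^{2k+2}$, so the relevant square roots are integers with strong $2$-adic structure. I expect to need the sixth moment as well. This is harder: it involves counting $6$-tuples, i.e.\ the number of triples of pairs with equal sums, and for a Sidon set this reduces to counting zero-sum $6$-subsets. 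That count is not determined a priori, but it is nonnegative and has a fixed parity modulo a suitable power of $2$. Choosing $a,b$ as the two consecutive admissible values (multiples of $4$ congruent to $|S|$ modulo $8$, or similar) straddling the mean square makes each summand nonnegative. This yields an inequality between the sixth moment and the lower moments.

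Substituting the energy identity and the bound on zero-sum $6$-sets then gives a contradiction for $|S|=m-2$ when $k\ge1$. The case $n\equiv 1\pmod 4$ fails because there the scale $2^{(n+1)/2}$ is an odd power of two, so the admissible values do not straddle tightly.

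The main obstacle is obtaining a usable handle on the sixth moment, i.e.\ on the number of zero-sum $6$-subsets of $S$. I would first try to avoid it entirely with a $2$-adic argument on the fourth-moment identity alone. That identity fixes $\sum_{u\ne0}\hat f(u)^4$ exactly. Writing $\hat f(u)=2g(u)$ with $g(u)$ odd or even according to the parity of $|S\cap H_u|$, I would reduce the identity modulo a high power of $2$. The goal is to show that the number of $u$ with $g(u)$ odd is forced to be incompatible with the second-moment identity. If that fails, I would fall back to the code formulation: the matrix with columns $S$ (augmented by a row of ones) gives a $[m-2,\,m-2-n-1,\ge5]$-type code. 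I would then apply the MacWilliams identities together with nonnegativity of the dual weight distribution. This is a linear-programming argument of the same kind Brouwer--Tolhuizen used, now exploiting that the dual weights are divisible by $2^{k}$-ish quantities when $n\equiv 3\pmod 4$.
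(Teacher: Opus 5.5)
Your proposal never reaches a contradiction. The steps that would carry the proof (the choice of $a,b$, the control of the sixth moment, the final substitution) are only stated as expectations, and the quantities you can actually compute are consistent with $|S|=m-2$. Write $m=4s^2$ with $s=2^k$, and put $x_u=\hat f(u)^2$. Your two identities give $\sum_{u\neq0}(x_u-m)=-(2m^2-5m+4)$ and $\sum_{u\neq0}(x_u-m)^2=5m^3-25m^2+40m-16$. So over the $2^n-1$ nonzero $u$, the $x_u$ have mean about $m-4$ and variance about $10m$. A two-point inequality $\sum_{u\neq0}(x_u-4g^2)(x_u-4(g+1)^2)\ge0$ says only that this variance is at least $(\bar x-4g^2)(4(g+1)^2-\bar x)$, which is at most about $4m$, so it never bites. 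For $g=s-1$ it reads $5m^3-25m^2+40m-16\ge(8s-4)(2m^2-5m+4)$, which is simply true.

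Since $\hat f(u)$ may be any even integer (its typical size is $2^{k+1}$, not $2^{k+1}\sqrt2$), no parity or $2$-adic refinement of the fourth-moment identity changes this. For the sixth moment the inequality runs the wrong way. Positivity of $\sum_{u\neq0}x_u(x_u-a^2)(x_u-b^2)$ gives a \emph{lower} bound on $\sum_u\hat f(u)^6$, so you would need an \emph{upper} bound on the number of zero-sum $6$-subsets; nonnegativity or a congruence cannot help. The natural upper bound comes from $\mult_{S,3}(a)\le\floor{|S|/3}$ for $a\notin S$. It gives $\sum_{u\neq0}\hat f(u)^6\le\tfrac72m^5+O(m^4)$, while positivity only forces $\sum_{u\neq0}\hat f(u)^6\ge\tfrac12m^5+O(m^4)$, so again there is no contradiction. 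The MacWilliams/LP fallback is not carried out. Its one proposed new ingredient, divisibility of the dual weights by roughly $2^k$, has no justification for an arbitrary Sidon set.

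The missing idea is arithmetic modulo $3$, not modulo powers of $2$. Since $2^{(n+1)/2}=4^{k+1}\equiv4\pmod6$, the target size $|S|=m-2$ is $\equiv2\pmod 6$, so $\ell=(|S|-2)/3$ is even. For $a\notin S$, the $3$-subsets of $S$ with sum $a$ are pairwise disjoint, so $\mult_{S,3}(a)\le\ell$. Averaging $\sum_{a\notin S}\mult_{S,3}(a)=\binom{|S|}{3}$ then gives $|\maxmults{S}|\ge(8m+4)/6$. For $a\in\maxmults{S}$, the triples cover all of $S$ except a pair $\{s_{a,1},s_{a,2}\}$. Because $\ell$ is even, $s_{a,1}+s_{a,2}=\sum_{s\in S}s$, so this pair $\{s_1,s_2\}$ is the same for every such $a$. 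Hence $a+s_1,a+s_2\notin S+S$, and so $|\maxmults{S}|\le\Delta_A(s_1+s_2)$ with $A=\F_2^n\setminus(S+S)$. This is computed via $\Delta_D(s_1+s_2)=2(|S|-2)+6\mult_{S,4}(s_1+s_2)$ together with $\mult_{S,4}(s_1+s_2)\le(|S|-2)(|S|-8)/12$. The latter holds because $t+s_1+s_2\notin\maxmults{S}$ for $t\in S\setminus\{s_1,s_2\}$. The result is $|\maxmults{S}|\le 6$, contradicting the lower bound. This is also the real reason for the restriction to $n\equiv3\pmod 4$: for $n\equiv1\pmod4$ one has $m-2\equiv0\pmod 6$, and the method only yields $|S|\le m-1$.
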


It is worth mentioning that, although \Cref{thm:main} is the first improvement in more than three decades to the upper bound on Sidon sets in $\F_2^{4k+3}$, the gap between the best-known lower and upper bounds still remains large (this is also true in the case of even dimensions).
In particular, in odd dimensions, we lack constructions of infinite families of large Sidon sets, and the best recent improvements have come from considering intersections of graphs of almost perfect nonlinear (APN) functions with affine hyperplanes \cite{CzerwinskiPottLargeSidon}.

We prove \Cref{thm:main} by showing that for $n\geq6$, the value of $s_{\max}(\F_2^n)$ is strictly less than the smallest integer $r_n$ such that $r_n \equiv 2 \mod 6$ and $r_n > \sqrt{2^{n+1}-8}-2$.
Unlike \cite{Brouwer1993}, we do not use a coding-theoretic approach, but instead, we introduce a new structural constraint on Sidon sets whose sizes are congruent to $2$ modulo $6$.
In particular, for a Sidon set $S \subseteq \F_2^n$ such that $|S| \equiv 2 \mod 6$, we derive lower and upper bounds (the latter via a structural argument) on the number of points $a \in \F_2^n \setminus S$ such that there are exactly $\frac{|S|-2}{3}$ distinct subsets of size $3$ in $S$ with sum equal to $a$ (which is the maximal amount when $|S| \equiv 2 \mod 6$).
We then derive a contradiction by showing that these lower and upper bounds cannot hold if the size of $S$ is too large.

We discuss in \Cref{sec:cor} two corollaries of \Cref{thm:main}.
The first states that, for all even $t \geq 4$, there does not exist a binary linear $[2^t-3,2^t-2t-2,5]$-code, improving the nonexistence result of Brouwer and Tolhuizen.
For our second corollary, we prove the nonlinearity of any APN function in an even number $n \geq 4$ of variables is at least $3$ (which was previously known to be at least $2$, see \cite{CzerwinskiPottLargeSidon}).

\section{Notation}\label{sec:notation}
For any subset $S \subseteq \F_2^n$, let $S+S = \set{s_1 + s_2 : s_1, s_2 \in S}$ and $S\widehat{+}S = (S+S) \setminus \set{0}$.
By definition, $S$ is a Sidon set if and only if $|S\widehat{+}S| = \binom{|S|}{2}$ since the sums of two distinct pairs of points in $S$ are not equal if and only if $S$ is Sidon.
For any point $a \in \F_2^n$, we let $\Delta_S(a) = |S \cap (a+S)|$.
Also, for any positive integer $k$ such that $1 \leq k \leq |S|$, let $\binom{S}{k}$ be the family consisting of all subsets of $S$ of size $k$.
For any point $a \in \F_2^n$, let 
\[
\mathcal{T}_{S,k}(a) = \set{\set{x_1, \dots, x_k} \in \binom{S}{k} : \sum_{i=1}^k x_i=a},
\]
and let $\mult_{S,k}(a)$ be the size of $\mathcal{T}_{S,k}(a)$. 

When $S \subseteq \F_2^n$ is a Sidon set, any distinct $T_1, T_2 \in \mathcal{T}_{S,3}(a)$ are disjoint if $a \notin S$.
Indeed, if $x_1+x_2 +x_3 = a = x_1+x_4+x_5$ for some $x_1, \dots, x_5 \in S$, then $x_2+x_3+x_4+x_5=0$, implying $\set{x_2, x_3} = \set{x_4,x_5}$ since $S$ is a Sidon set.
In particular, this provides the inequality 
$
\mult_{S,3}(a) \leq \floor{\frac{|S|}{3}}
$
for any point $a \notin S$ (this was also shown in \cite[Corollary 3.4]{quadspaper}).
We denote by $\maxmults{S}$ the set of points $a$ in the complement of $S$ such that $\mult_{S,3}(a)$ reaches this optimal value, i.e.  
\[
\maxmults{S} = \set{a \in \F_2^n \setminus S : \mult_{S,3}(a) = \floor{\frac{|S|}{3}}}.
\]

\section{Proof of \Cref{thm:main}}

As mentioned in the introduction, we are interested in bounding from below and above the size of $\maxmults{S}$ when $S \subseteq \F_2^n$ is a Sidon set such that $|S| \equiv 2 \mod 6$.
Let us first consider $S$ to be an arbitrary Sidon set (with no restrictions on its size).
For any point $a \in \F_2^n \setminus S$, we have by definition $\floor{\frac{|S|}{3}} - \mult_{S,3}(a) \geq 0$ with equality if and only if $a \in \maxmults{S}$.
In particular, this implies that 
$
2^n-|S| - |\maxmults{S}| \leq \sum_{a \in \F_2^n \setminus S} \parens{\floor{\frac{|S|}{3}} - \mult_{S,3}(a)}.
$
Recall from \Cref{sec:notation} that any two elements of $\mathcal{T}_{S,3}(a)$ are disjoint.
Hence, $\sum_{a \in \F_2^n \setminus S} \mult_{S,3}(a) = \binom{|S|}{3}$, and we then have $2^n -|S|-|\maxmults{S}| \leq (2^n-|S|)\floor{\frac{|S|}{3}} -\binom{|S|}{3}$.
Upon rearrangement, we have
\begin{equation}\label{eq:lowerbound-on-maxmults}
    2^n-|S| - (2^n-|S|)\floor{\frac{|S|}{3}}  + \binom{|S|}{3} \leq |\maxmults{S}|.
\end{equation}
In particular, if $|S| \equiv 2 \mod 3$, then $\floor{\frac{|S|}{3}}=\frac{|S|-2}{3}$ and inequality (\ref{eq:lowerbound-on-maxmults}) is equivalent to 
\begin{equation}\label{eq:lowerbound-on-maxmults-size2mod3}
    \frac{|S|^3 -|S|^2 -8|S| -2^{n+1}|S| + 5 \cdot 2^{n+1}}{6} \leq |\maxmults{S}|.
\end{equation}

The remainder of this section is then motivated by a particular structural property of a Sidon set $S$ such that $\maxmults{S}\neq\emptyset$ and $|S| \equiv 2 \mod 6$, and we will use this structural property to derive an upper bound on the size of $\maxmults{S}$.

\begin{proposition}\label{prop:2-nontriple-pts}
    Let $S\subseteq \F_2^n$ be a Sidon set such that $|S| \equiv 2 \mod 6$, and assume that $\maxmults{S} \neq \emptyset$.
    There exists a unique pair of distinct points $s_1, s_2 \in S$ such that for all $a \in \maxmults{S}$,
    \[
    S \setminus \set{s_1, s_2} = \bigcup_{T \in \mathcal{T}_{S,3}(a)}T.
    \]
\end{proposition}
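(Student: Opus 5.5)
The plan is to identify the two points left uncovered by the triples of a single $a \in \maxmults{S}$, and then show that this pair is determined by $S$ alone, independently of $a$. Write $|S| = 6k+2$, so $\floor{\frac{|S|}{3}} = 2k$.

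First, fix any $a \in \maxmults{S}$. By \Cref{sec:notation}, the $2k$ triples in $\mathcal{T}_{S,3}(a)$ are pairwise disjoint, since $a \notin S$. Their union therefore has exactly $6k$ elements, and so
\[
S \setminus \bigcup_{T \in \mathcal{T}_{S,3}(a)} T = \set{u_a, v_a}
\]
for two distinct points $u_a, v_a \in S$. Because $\maxmults{S} \neq \emptyset$, at least one such pair exists.

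Next, I compute $u_a + v_a$. Let $\sigma = \sum_{x \in S} x$. Summing the elements of $S$ triple by triple gives
\[
\sigma = \sum_{T \in \mathcal{T}_{S,3}(a)} \sum_{x\in T} x + u_a + v_a = 2k\cdot a + u_a+v_a = u_a+v_a .
\]
Here the term $2k \cdot a$ vanishes because the number of triples, $2k$, is even. This is exactly where the hypothesis $|S| \equiv 2 \mod 6$, rather than just $2 \mod 3$, is used. So every uncovered pair sums to the same element $\sigma$, and $\sigma \neq 0$ because $u_a \neq v_a$.

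Then I invoke the Sidon property. For $a, b \in \maxmults{S}$ we have $u_a + v_a = \sigma = u_b + v_b$. If the pairs $\set{u_a,v_a}$ and $\set{u_b,v_b}$ were different, they would share at most one point. If they share exactly one point, cancelling it from the equality forces the other two points to coincide, a contradiction. If they are disjoint, we get four distinct points of $S$ with zero sum, contradicting that $S$ is Sidon. Hence the two pairs coincide. Setting $\set{s_1,s_2}$ equal to this common pair gives the required identity for every $a \in \maxmults{S}$.

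Finally, uniqueness holds because the identity for any single $a$ determines $\set{s_1, s_2}$ as the complement of $\bigcup_{T\in\mathcal{T}_{S,3}(a)} T$ in $S$. I do not expect a serious obstacle. The only delicate point is the parity observation in the second step: the number of triples must be even so that the $a$'s cancel.
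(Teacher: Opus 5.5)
Your proposal is correct and follows essentially the same route as the paper: you take the two points left uncovered by the disjoint triples for a fixed $a \in \maxmults{S}$, use the evenness of $\frac{|S|-2}{3}$ to get $u_a+v_a=\sum_{x\in S}x$ independently of $a$, and then apply the Sidon property to conclude the pair is the same for every $a$. You also spell out the Sidon case analysis (shared point versus disjoint pairs) and the uniqueness step more explicitly than the paper does.
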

\begin{proof}
Let $a \in \maxmults{S}$.
Then $\mult_{S,3}(a) = \floor{\frac{|S|}{3}}= \ell$, where $\ell = \frac{|S|-2}{3}$.
The union $\bigcup_{T \in \mathcal{T}_{S,3}(a)}T$ has size $|S|-2$, and so let $s_{a,1},s_{a,2}$ be the two points of $S$ not contained in this union. 
Hence, $\mathcal{T}_{S,3}(a)$ is a partition of $S \setminus \set{s_{a,1},s_{a,2}}$ into $\ell$ distinct triples whose sums are equal to $a$, implying
$
\sum_{s \in S} s = s_{a,1}+s_{a,2} + \ell a = s_{a,1}+s_{a,2}
$
because $\ell = \frac{|S|-2}{3}$ is even.
In particular, since $\sum_{s \in S} s$ does not depend on $a$, the value of $s_{a,1}+s_{a,2}$ does not depend on $a$.
The result immediately follows by setting $\set{s_1, s_2} = \set{s_{a,1},s_{a,2}}$ with uniqueness holding as $S$ is Sidon. 
\end{proof}

We now begin deriving our upper bound on the size of $\maxmults{S}$ using the previous result.

\begin{lemma}\label{lem:maxmults-upperbound-by-deltaA}
    Let $S\subseteq \F_2^n$ be a Sidon set such that $|S| \equiv 2 \mod 6$, and assume that $\maxmults{S} \neq \emptyset$.
    Let $s_1, s_2 \in S$ be defined as in \Cref{prop:2-nontriple-pts}.
    Then $|\maxmults{S}| \leq \Delta_A(s_1+s_2)$ where $A = \F_2^n \setminus (S+S)$.
\end{lemma}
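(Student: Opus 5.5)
Let $c = s_1+s_2$. Since $\Delta_A(c) = |A \cap (c+A)|$ counts the points $x$ with $x \notin S+S$ and $x+c \notin S+S$, the plan is to build an injection from $\maxmults{S}$ into this set. The natural candidate is the map $a \mapsto a + s_1$ (or equivalently $a+s_2$, since $(a+s_1)+c = a+s_2$). So the goal is to show that for every $a \in \maxmults{S}$, both $a+s_1$ and $a+s_2$ lie outside $S+S$. Injectivity is immediate because translation by $s_1$ is a bijection of $\F_2^n$.

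First I check that $a+s_1 \notin S+S$. Suppose instead that $a + s_1 = x+y$ for some $x,y \in S$, allowing $x=y$, which gives the value $0$. If $x=y$, then $a = s_1 \in S$, which contradicts $a \notin S$. So $x \neq y$ and $a = s_1+x+y$. Neither $x$ nor $y$ equals $s_1$: otherwise $a$ would equal $y$ or $x$, which lies in $S$. Hence $\{s_1,x,y\}$ is a $3$-subset of $S$ with sum $a$, so $\{s_1,x,y\} \in \mathcal{T}_{S,3}(a)$. But by \Cref{prop:2-nontriple-pts} every member of $\mathcal{T}_{S,3}(a)$ is contained in $S\setminus\{s_1,s_2\}$, so $s_1$ belongs to no such triple. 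This is a contradiction. The same argument with $s_2$ in place of $s_1$ shows $a+s_2 \notin S+S$.

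Hence $a+s_1 \in A$ and $(a+s_1)+c = a+s_2 \in A$, that is, $a+s_1 \in A\cap(c+A)$. The map $a\mapsto a+s_1$ is an injection $\maxmults{S}\to A\cap (c+A)$, which gives $|\maxmults{S}|\le \Delta_A(s_1+s_2)$.

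The only point needing care is the degenerate cases: $x=y$, which gives the sum $0\in S+S$, and triples that repeat $s_1$. Both are handled by $a\notin S$, so I expect no real obstacle beyond invoking \Cref{prop:2-nontriple-pts} correctly.
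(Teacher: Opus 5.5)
Your proof is correct and follows the paper's argument. You show that every $a \in \maxmults{S}$ has both $a+s_1, a+s_2 \in A$, using \Cref{prop:2-nontriple-pts} to rule out $s_1$ or $s_2$ lying in a triple summing to $a$, so translation by $s_1$ embeds $\maxmults{S}$ into $A \cap (A+s_1+s_2)$; your explicit handling of the degenerate cases $x=y$ and $s_1 \in \{x,y\}$ just spells out what the paper compresses into ``$a+s \notin A$ is the same as $a \in s+(S+S)$.''
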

\begin{proof}
If $a \notin S$ and $s \in S$, then $s \in \bigcup_{T \in \mathcal{T}_{S,3}(a)}T$ if and only if $a+s \notin A$ because the latter condition is the same as $a \in s+(S+S)$.
By \Cref{prop:2-nontriple-pts}, we know $S \setminus \set{s_1,s_2} = \bigcup_{T \in \mathcal{T}_{S,3}(a)} T$ for any $a \in \maxmults{S}$.
Thus, for any $a \in \maxmults{S}$, we have $a+s_1\in A$ and $a+s_2 \in A$.
We then have
$
|\maxmults{S}| \leq |(A +s_1) \cap (A+s_2)| = |A \cap (A +s_1+s_2)|=\Delta_A(s_1+s_2)
$.
\end{proof}

Under the same assumptions and notation as \Cref{lem:maxmults-upperbound-by-deltaA}, we see that to determine an upper bound on $|\maxmults{S}|$, it suffices to determine an upper bound on $\Delta_A(s_1+s_2)$.
However, this is equivalent to determining an upper bound on $\Delta_D(s_1+s_2)$ where $D = S \widehat{+}S$.

\begin{lemma}\label{lem:deltaA-deltaD}
    Let $S \subseteq \F_2^n$, let $D = S \widehat{+}S$, and let $A =\F_2^n \setminus (S+S)$.
    For any $d \in D$, we have $\Delta_A(d) = 2^n - 2|D| + \Delta_D(d)$.
\end{lemma}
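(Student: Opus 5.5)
This is an inclusion–exclusion count; the plan is to exploit the complement structure. Since $S+S = D \cup \set{0}$ (with $0 \in S+S$ whenever $S \neq \emptyset$, and $0 \notin D$ by definition), the set $A$ is exactly $\F_2^n \setminus (D \cup \set{0})$. Thus $|A| = 2^n - |D| - 1$. I will use $\Delta_A(d) = |A \cap (A+d)|$ and compute the size of the complement of this intersection. The empty case $S=\emptyset$ is vacuous because then $D=\emptyset$ and no $d \in D$ exists.

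First step: write $B = D \cup \set{0}$, so $A = \F_2^n \setminus B$ and $A + d = \F_2^n \setminus (B+d)$. Then by De Morgan,
\[
\Delta_A(d) = 2^n - |B \cup (B+d)| = 2^n - 2|B| + |B \cap (B+d)| = 2^n - 2|D| - 2 + \Delta_B(d).
\]
Second step: relate $\Delta_B(d)$ to $\Delta_D(d)$. We have $B \cap (B+d) = \bigl(D \cup \set{0}\bigr) \cap \bigl((D+d) \cup \set{d}\bigr)$. Expanding, this is the union of four pieces: $D \cap (D+d)$, $D \cap \set{d}$, $\set{0} \cap (D+d)$, and $\set{0}\cap\set{d}$.

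Since $d \in D$ and $d \neq 0$, the piece $D \cap \set{d}$ equals $\set{d}$. The piece $\set{0} \cap (D+d)$ equals $\set{0}$, because $0 = d+d \in D+d$. The last piece is empty. Moreover, $d \notin D+d$ because $0 \notin D$, and $0 \notin D$. Hence $d$ and $0$ are new points not already in $D \cap (D+d)$, and they are distinct. Therefore $\Delta_B(d) = \Delta_D(d) + 2$.

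Substituting gives $\Delta_A(d) = 2^n - 2|D| + \Delta_D(d)$. The only delicate point is the bookkeeping in the second step: checking that the two extra points $0$ and $d$ are genuinely outside $D\cap(D+d)$ and distinct. That is exactly where the hypothesis $d\in D$, together with $0 \notin D$, is used.
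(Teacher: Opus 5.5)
Your proof is correct and is essentially the paper's counting argument: both rest on $A$, $D$, $\set{0}$ partitioning $\F_2^n$, together with the facts $0 \in d+D$ and $d \notin d+D$. You organize it as the general complement identity $\Delta_{\F_2^n \setminus B}(d) = 2^n - 2|B| + \Delta_B(d)$ for $B = S+S$ followed by the two-point correction $\Delta_B(d) = \Delta_D(d)+2$, whereas the paper shows $|A|-\Delta_A(d) = |A \cap (d+D)|$ and then partitions $d+D$ into its pieces in $A$, $D$, and $\set{0}$.
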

\begin{proof}
Let $d \in D$.
By definition, $|A| - \Delta_A(d) = |A \cap (\F_2^n \setminus (d+A))|$.
    Since $\F_2^n \setminus A = S+S = D \cup \set{0}$ and $a+d\neq 0$ for any $a \in A$, it follows that $|A \cap (\F_2^n \setminus (d+A))| = |A \cap (d+D)|$.
    From these first two equalities, we have $|A| -\Delta_A(d) =  |A \cap (d+D)|$.
    Note that $A,D, \set{0}$ partition $\F_2^n$, so $d+D$ is equal to the disjoint union 
    \[
    (A \cap (d+D)) \cup (D \cap (d+D)) \cup (\set{0} \cap (d+D)),
    \]
    which then has size $|D|= |A|-\Delta_A(d) + \Delta_D(d) + 1$. 
    The result follows from the equality  $|A|=2^n-|D|-1$.
\end{proof}

In the particular case that $S \subseteq \F_2^n$ is a Sidon set and $s_1, s_2 \in S$ are distinct, the value of $\Delta_D(s_1+s_2)$ can be computed from $\mult_{S,4}(s_1+s_2)$ (recall that $\mult_{S,4}(s_1+s_2)$ is the number of subsets of size $4$ of $S$ that have sum equal to $s_1+s_2$) in the following way.

\begin{lemma}\label{lem:deltaD-multS4}
    Let $S \subseteq \F_2^n$ be a Sidon set, let $s_1, s_2 \in S$ be distinct, and let $d = s_1+s_2$.
    Let $D = S\widehat{+}S$.
    Then $\Delta_D(d)=2(|S|-2)+6\mult_{S,4}(d)$.
\end{lemma}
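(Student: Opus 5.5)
The plan is to compute $\Delta_D(d)=|D\cap(d+D)|$ directly, by counting the elements $e\in D$ with $e+d\in D$. Since $S$ is Sidon, every $e\in D$ is $x+y$ for a \emph{unique} unordered pair $\{x,y\}\in\binom{S}{2}$. So $\Delta_D(d)$ is the number of unordered pairs $\{x,y\}\in\binom S2$ for which $x+y+s_1+s_2=z+w$ for some distinct $z,w\in S$; the pair $\{z,w\}$ is then also unique. I will split the count by how $\{x,y\}$ meets $\{s_1,s_2\}$.

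\emph{Case 1: $\{x,y\}$ meets $\{s_1,s_2\}$.}
\begin{itemize}
\item If $\{x,y\}=\{s_1,s_2\}$, then $e+d=0\notin D$, so this pair is not counted.
\item If $x=s_1$ and $y\neq s_2$, then $e+d=y+s_2$, which lies in $D$ exactly when $y\neq s_2$. This gives the $|S|-2$ choices $y\in S\setminus\{s_1,s_2\}$.
\item Symmetrically, $x=s_2$ and $y\neq s_1$ gives another $|S|-2$ choices.
\end{itemize}
These $2(|S|-2)$ elements $e$ are pairwise distinct, because they come from distinct pairs and the pair representation is unique. This yields the term $2(|S|-2)$.

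\emph{Case 2: $\{x,y\}\cap\{s_1,s_2\}=\emptyset$.} Here I claim $x,y,z,w$ are four distinct points with $\{z,w\}\cap\{s_1,s_2\}=\emptyset$.
\begin{itemize}
\item If, say, $z=s_1$, then $x+y=w+s_2$. Since $\{x,y\}\ne\{w,s_2\}$ (as $s_2\notin\{x,y\}$), this contradicts Sidon. Hence $\{z,w\}$ avoids $\{s_1,s_2\}$.
\item If, say, $x=z$, then $y+w=s_1+s_2$ with $y\ne w$ (else $x+y=z+w$ would force $d=0$). This is two distinct pairs with equal sum, again contradicting Sidon.
\end{itemize}
So $\{x,y,z,w\}\in\mathcal T_{S,4}(d)$.

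Conversely, let $Q\in\mathcal T_{S,4}(d)$. Then $Q$ contains neither $s_1$ nor $s_2$: if $s_1\in Q$, the other three points of $Q$ together with $s_2$ sum to $0$. If these four points are distinct, this violates Sidon. If $s_2\in Q$, the remaining two points of $Q$ would sum to $0$, which is impossible.

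Each such $Q$ has $3$ splittings into two pairs, and each splitting gives $2$ choices of which pair is $\{x,y\}$. That is $6$ elements $e=x+y$ per $Q$. They are distinct within a single $Q$ and across different $Q$'s, because $e$ determines $\{x,y\}$, and $\{x,y\}$ determines $\{z,w\}$, hence $Q$. This yields the term $6\mult_{S,4}(d)$.

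The main care point is the bookkeeping that rules out degenerate coincidences. At each such step I would apply the Sidon property in the form ``$a+b=c+e$ with $\{a,b\}\ne\{c,e\}$ is impossible'', which handles all degenerate overlaps. Adding the two cases gives $\Delta_D(d)=2(|S|-2)+6\mult_{S,4}(d)$.
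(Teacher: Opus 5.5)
Your proof is correct, and its backbone matches the paper's. By the Sidon property, each element of $D\cap(d+D)$ corresponds to a unique pair of 2-subsets $X=\set{x,y}$, $Y=\set{z,w}$ with $x+y+z+w=d$. The count then splits into a degenerate part of size $2(|S|-2)$ and a part of size $6\mult_{S,4}(d)$, which comes from the ordered splittings of members of $\mathcal{T}_{S,4}(d)$.

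The difference lies in the splitting criterion. The paper splits by whether $X\cap Y\neq\emptyset$. An overlap forces $\set{X,Y}=\set{\set{t,s_1},\set{t,s_2}}$, and disjoint pairs are \emph{by definition} the ordered splittings of elements of $\mathcal{T}_{S,4}(d)$. So the paper needs nothing about whether such 4-sets contain $s_1$ or $s_2$.

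You split instead by whether $X$ meets $\set{s_1,s_2}$. This makes your Case 1 a triviality, but it forces you to verify two extra facts in Case 2. First, $X$ and $Y$ are then disjoint. Second, for the converse, no $Q\in\mathcal{T}_{S,4}(d)$ contains $s_1$ or $s_2$; otherwise some splittings of $Q$ would land in Case 1 and be miscounted. You handle both correctly.

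The second fact is exactly what the paper proves at the start of the proof of the subsequent lemma bounding $\mult_{S,4}(d)$, where it shows $\mult_{S,4}(d)=\mult_{S\setminus\set{s_1,s_2},4}(d)$. So your route front-loads that observation, while the paper's route is slightly more economical for this lemma taken alone. A posteriori, the two partitions coincide.

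One small redundancy: your bullet showing $\set{z,w}\cap\set{s_1,s_2}=\emptyset$ is not needed once the converse paragraph is in place, since surjectivity onto the splittings already follows from $Q$ avoiding $s_1,s_2$.
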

\begin{proof}
    Consider an arbitrary point $a \in D \cap (d+D)$.
    Then, there exist $X,Y \in \binom{S}{2}$ such that $a =\sum_{x \in X} x$ and $a+d = \sum_{y \in Y}y$, and moreover, we know that both $X$ and $Y$ are uniquely defined since $S$ is Sidon.
    Note that $d = \sum_{x \in X} x + \sum_{y \in Y} y$.
    So, $\Delta_D(d)$ is equal to the number of ordered pairs $(X,Y) \in \binom{S}{2}\times \binom{S}{2}$ such that $d = \sum_{x \in X} x + \sum_{y \in Y} y$.
     
    Let us count the number $N_1$ of $(X,Y) \in \binom{S}{2}\times \binom{S}{2}$ such that $d = \sum_{x \in X} x + \sum_{y \in Y} y$ and $X \cap Y \neq \emptyset$ (note that this implies $X$ and $Y$ have an intersection size of exactly $1$ since $d \neq 0$).
    Assume that $X= \set{x_1, x_2} \in \binom{S}{2}$ and $Y = \set{y_1, y_2} \in \binom{S}{2}$ such that $d=x_1+x_2+y_1+y_2$, and without loss of generality assume $x_1 = y_1$.
    Then, $x_2+y_2=s_1+s_2$, implying $\set{x_2, y_2} = \set{s_1,s_2}$ as $S$ is Sidon.
    So, $\set{\set{x_1, x_2}, \set{y_1, y_2}} = \set{\set{x,s_1}, \set{x,s_2}}$ for some $x \in S \setminus \set{s_1, s_2}$.
    Since there are exactly $|S|-2$ possible choices for $x$, it follows that $N_1=2(|S|-2)$.
    
    The number $N_2$ of $(X,Y) \in \binom{S}{2}\times \binom{S}{2}$ such that $d = \sum_{x \in X} x + \sum_{y \in Y} y$ and $X \cap Y = \emptyset$ is clearly equal to $6\mult_{S,4}(d)$.
    Indeed, any $Z \in \binom{S}{4}$ satisfying $d=\sum_{z \in Z} z$ corresponds to exactly $\binom{4}{2}=6$ ordered pairs $(X,Y) \in \binom{S}{2}\times \binom{S}{2}$ such that $d = \sum_{x \in X} x + \sum_{y \in Y} y$.
    Hence, $\Delta_D(d) = N_1+N_2 = 2(|S|-2)+6\mult_{S,4}(d)$.
\end{proof}

We now provide an upper bound on $\mult_{S,4}(d)$ when $S \subseteq\F_2^n$ is a Sidon set such that $\maxmults{S} \neq \emptyset$ and $|S| \equiv 2 \mod 6$ and $d=s_1+s_2$, where $s_1, s_2 \in S$ are defined as in \Cref{prop:2-nontriple-pts}.
Indeed, we will use this upper bound to then derive our upper bound on $|\maxmults{S}|$.

\begin{lemma}\label{lem:multS4-bound}
    Let $S\subseteq \F_2^n$ be a Sidon set such that $|S| \equiv 2 \mod 6$, and assume that $\maxmults{S} \neq \emptyset$.
    Let $s_1, s_2 \in S$ be defined as in \Cref{prop:2-nontriple-pts}, and let $d = s_1+s_2$.
    Then 
    \[
    \mult_{S,4}(d) \leq \frac{(|S|-2)(|S|-8)}{12}.
    \]
\end{lemma}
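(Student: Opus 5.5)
The plan is to double count incidences between points of $S' = S \setminus \set{s_1,s_2}$ and the sets in $\mathcal{T}_{S,4}(d)$. We bound, for each $x \in S'$, the number of $Z \in \mathcal{T}_{S,4}(d)$ containing $x$, using $\mathcal{T}_{S,3}(x+d)$. Throughout write $m = |S|-2$, so $m \equiv 0 \mod 6$ and $\floor{\frac{|S|}{3}} = \frac{m}{3}$.

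\textbf{Step 1: every $Z \in \mathcal{T}_{S,4}(d)$ lies in $S'$.}
\begin{itemize}
\item If $s_1,s_2 \in Z$, the remaining two points of $Z$ would sum to $0$, which is impossible.
\item If $s_1 \in Z$ but $s_2 \notin Z$, write $Z=\set{s_1,y,z,w}$. Then $y+z+w+s_2 = 0$ with four distinct points of $S$, contradicting the Sidon property. The case $s_2 \in Z$, $s_1 \notin Z$ is symmetric.
\end{itemize}
Hence $4\mult_{S,4}(d) = \sum_{x \in S'} N(x)$, where $N(x)$ is the number of $Z \in \mathcal{T}_{S,4}(d)$ with $x \in Z$. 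The map $Z \mapsto Z\setminus\set{x}$ is injective into $\mathcal{T}_{S,3}(x+d)$, and its image consists of triples avoiding $x$. So it suffices to prove
\[
N(x) \leq \frac{m-6}{3} \quad \text{for every } x \in S'.
\]
Summing then gives $4\mult_{S,4}(d) \leq \frac{m(m-6)}{3}$, which is exactly the claimed $\frac{(|S|-2)(|S|-8)}{12}$.

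\textbf{Step 2: the crude bound.} Fix $x \in S'$ and put $b = x+s_1+s_2$. Then $b \notin S$, since otherwise $x+s_1+s_2+b=0$ is a forbidden zero sum; note $b \ne x,s_1,s_2$ because $S$ is Sidon. So the elements of $\mathcal{T}_{S,3}(b)$ are pairwise disjoint, as in \Cref{sec:notation}. One of them is $\set{x,s_1,s_2}$. The triples counted by $N(x)$ are the others, so they are disjoint triples inside $S\setminus\set{x,s_1,s_2}$, a set of size $m-1$. Since $3 \mid m$, this gives $N(x) \le \floor{\frac{m-1}{3}} = \frac{m}{3}-1$, which is one more than needed.

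\textbf{Step 3: excluding the extremal case (the main obstacle).} Suppose $N(x) = \frac{m}{3}-1$. Then $\mult_{S,3}(b) \ge \frac{m}{3} = \floor{\frac{|S|}{3}}$, so $b \in \maxmults{S}$. By \Cref{prop:2-nontriple-pts}, the triples of $\mathcal{T}_{S,3}(b)$ cover exactly $S\setminus\set{s_1,s_2}$; in particular they avoid $s_1$ and $s_2$. But $\set{x,s_1,s_2} \in \mathcal{T}_{S,3}(b)$ covers both, a contradiction. Therefore $N(x) \le \frac{m}{3}-2 = \frac{m-6}{3}$, and combining with Step 1 completes the argument.

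The crux is Step 3: the uniqueness of the pair $\set{s_1,s_2}$ in \Cref{prop:2-nontriple-pts} is what supplies the extra saving of one per point $x$.
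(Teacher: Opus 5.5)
Your proof is correct and follows essentially the same route as the paper. Both arguments restrict $\mathcal{T}_{S,4}(d)$ to $S'=S\setminus\set{s_1,s_2}$, double count incidences between points and quadruples, and bound each local count through $\mathcal{T}_{S,3}(x+d)$, using \Cref{prop:2-nontriple-pts} to rule out $x+d\in\maxmults{S}$ because $\set{x,s_1,s_2}\in\mathcal{T}_{S,3}(x+d)$. The differences are only cosmetic: the paper states the exact identity $\mult_{S,3}(x+d)=1+N(x)$ where you use an injection plus a preliminary crude bound, and you explicitly check $x+d\notin S$, which the paper uses without stating.
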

\begin{proof}
    Let $S' =S \setminus \set{s_1, s_2}$.
    First, let us show that $\mult_{S,4}(d) = \mult_{S',4}(d)$.
    It suffices to show that there cannot be four distinct points in $S$, containing at least one of $s_1$ or $s_2$, whose sum is $d=s_1+s_2$.
    By way of contradiction, assume $\set{x_1, \dots, x_4} \in \binom{S}{4}$ such that $x_1 + \cdots +x_4 = s_1+s_2$ and 
    $
    \set{x_1, \dots, x_4} \cap \set{s_1, s_2} \neq \emptyset.
    $
    If $|\set{x_1, \dots, x_4} \cap \set{s_1, s_2}| = 2$, then, without loss of generality, we have $x_3 + x_4 = 0$, a contradiction. 
    Otherwise, we have, without loss of generality, that $x_1=s_1$ implying $x_2+x_3+x_4=s_2$, contradicting the assumption that $S$ is Sidon. 
    Hence, $\mult_{S,4}(d) = \mult_{S',4}(d)$.
    
    For $t \in S'$, let $k_t^{(4)} = |\set{T \in \mathcal{T}_{S,4}(d) : t \in T}|$.
    Clearly, we have $4\mult_{S',4}(d)= \sum_{t \in S'} k_t^{(4)}$.
    For any $t \in S'$, if $T \in \mathcal{T}_{S,4}(d)$ such that $t \in T$, then the triple $T \setminus \set{t}$ is in $\mathcal{T}_{S,3}(t+d)$.
    On the other hand, if $T \in \mathcal{T}_{S,3}(t+d)$, then $T \cup \set{t} \in \mathcal{T}_{S,4}(d)$ if and only if $t \notin T$.
    Moreover, for $t \in S'$, the only element of $\mathcal{T}_{S,3}(t+d)$ that contains $t$ is $\set{t,s_1, s_2}$ since $S$ is Sidon.
    Hence, for any $t \in S'$, we have $\mult_{S,3}(t+d)=1+k_t^{(4)}$. 

    Now, let $\ell = \floor{\frac{|S|}{3}}=\frac{|S|-2}{3}$.
    By \Cref{prop:2-nontriple-pts}, for any point $a \in \maxmults{S}$ the points $s_1, s_2$ cannot be contained in a triple of three points in $S$ that sum to $a$.
    So, for any $t \in S'$, we have $t+d \notin \maxmults{S}$ as $\set{t,s_1, s_2} \in \mathcal{T}_{S,3}(t+d)$, implying $k_t^{(4)} = \mult_{S,3}(t+d) -1 \leq\ell-2$.
    Thus, 
    \[
    \mult_{S,4}(d) =
    \frac{1}{4}\sum_{t \in S'}k_t^{(4)}
    \leq \frac{(|S|-2)(\ell-2)}{4}=\frac{(|S|-2)(|S|-8)}{12}.
    \]
\end{proof}

We are now ready to prove the following theorem, which directly implies \Cref{thm:main}.

\begin{theorem}
    Assume $n \geq 6$.
    Let $S\subseteq \F_2^n$ be a Sidon set.
    Then $|S|$ is strictly less than the smallest integer $r_n$ such that $r_n >\sqrt{2^{n+1}-8}-2$ and $r_n \equiv 2 \mod 6$.
\end{theorem}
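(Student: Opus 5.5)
The plan is to argue by contradiction. Suppose $S\subseteq\F_2^n$ is Sidon with $|S|\geq r_n$, and write $r=r_n$. Every subset of a Sidon set is Sidon, so after discarding points we may assume $|S|=r$ exactly. Then $|S|\equiv 2 \mod 6$, and the defining property of $r_n$ gives $(r+2)^2>2^{n+1}-8$, that is,
\[
2^{n+1}<r^2+4r+12 .
\]
We will play the lower bound (\ref{eq:lowerbound-on-maxmults-size2mod3}) on $|\maxmults{S}|$ against the upper bound coming from the chain of lemmas above. The hypothesis $n\geq 6$ is used only to make sure $r\geq 8$, so that $\ell=\frac{r-2}{3}\geq 2$ and the lemmas are non-vacuous; this is a quick check.

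\textbf{Step 1: $\maxmults{S}$ is nonempty.} Write $f(r)=\frac{r^3-r^2-8r-2^{n+1}(r-5)}{6}$ for the left side of (\ref{eq:lowerbound-on-maxmults-size2mod3}). Since $r\geq 8$ we have $r-5>0$, so we may substitute the strict bound $2^{n+1}<r^2+4r+12$. This gives
\[
6f(r)>r^3-r^2-8r-(r^2+4r+12)(r-5)=60 .
\]
Hence $|\maxmults{S}|\geq f(r)>0$, and Proposition~\ref{prop:2-nontriple-pts} together with the subsequent lemmas apply.

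\textbf{Step 2: upper bound on $|\maxmults{S}|$.} Let $s_1,s_2$ be the points from Proposition~\ref{prop:2-nontriple-pts} and set $d=s_1+s_2\in D$. We chain the lemmas in order:
\begin{itemize}
\item Lemma~\ref{lem:maxmults-upperbound-by-deltaA} gives $|\maxmults{S}|\leq\Delta_A(d)$.
\item Lemma~\ref{lem:deltaA-deltaD}, with $|D|=\binom{r}{2}$, gives $\Delta_A(d)=2^n-r(r-1)+\Delta_D(d)$.
\item Lemma~\ref{lem:deltaD-multS4} gives $\Delta_D(d)=2(r-2)+6\mult_{S,4}(d)$.
\item Lemma~\ref{lem:multS4-bound} gives $6\mult_{S,4}(d)\leq\frac{(r-2)(r-8)}{2}$.
\end{itemize}
Combining these yields
\[
|\maxmults{S}|\leq U:=2^n-\frac{r^2}{2}-2r+4 .
\]

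\textbf{Step 3: contradiction.} This is the only computational step, and I expect it to be the main place where care is needed. Expanding $6f(r)-6U$ and collecting the $2^{n+1}$ terms should factor as
\[
6f(r)-6U=(r-2)\bigl(r^2+4r+12-2^{n+1}\bigr).
\]
Both factors are positive: $r-2>0$, and the second factor is positive by the defining inequality of $r_n$. So $f(r)>U$, which contradicts $f(r)\leq|\maxmults{S}|\leq U$. The key thing to verify is that this factorization comes out exactly, so that the threshold $\sqrt{2^{n+1}-8}-2$ is precisely what is needed.

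Finally, to recover Theorem~\ref{thm:main} from this statement, take $n=4k+3$. Then $2^{\frac{n+1}{2}}=2^{2k+2}$ is an even power of two, so it is $\equiv 4\mod 6$, and hence $2^{\frac{n+1}{2}}-2\equiv 2\mod 6$. This number also lies strictly between $\sqrt{2^{n+1}-8}-2$ and $2^{\frac{n+1}{2}}-2+6$. Therefore $r_n=2^{\frac{n+1}{2}}-2$, and the statement gives $|S|\leq 2^{\frac{n+1}{2}}-3$.
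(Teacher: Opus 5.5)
Your proof is correct and is essentially the paper's argument. The paper likewise passes to a subset of size exactly $r_n$, shows $\maxmults{S}\neq\emptyset$ via $6f(r_n)=(r_n-5)(r_n^2+4r_n+12-2^{n+1})+60>0$, and chains the same four lemmas to reach $(|S|-2)(|S|^2+4|S|+12-2^{n+1})\leq 0$; it only differs in first stating this as a general bound $|S|\leq\sqrt{2^{n+1}-8}-2$ before deriving the contradiction.

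One small slip lies outside the statement itself, in your closing deduction of the main theorem. Minimality of $r_n=2^{\frac{n+1}{2}}-2$ requires the previous candidate to fail, i.e. $2^{\frac{n+1}{2}}-8<\sqrt{2^{n+1}-8}-2$, which holds because $12\cdot 2^{\frac{n+1}{2}}>44$. The comparison with $2^{\frac{n+1}{2}}-2+6$ that you wrote is vacuous and does not establish this.
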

\begin{proof}
    Assume that $|S| \equiv 2\mod 6$ and $\maxmults{S}\neq \emptyset$ and $|S|>2$ (note that no such Sidon exists when $n\leq 5$, see \cite{quadspaper}).
    Let $s_1, s_2 \in S$ be defined as in \Cref{prop:2-nontriple-pts}, and let $d=s_1+s_2$.
    Recall that since $S$ is Sidon, the set $D = S\widehat{+}S$ has size $\binom{|S|}{2}$.
    By \Cref{lem:maxmults-upperbound-by-deltaA}, we have $|\maxmults{S}| \leq \Delta_A(d)$, and 
    \begin{align*}
    \Delta_A(d) &= 2^n-2\binom{|S|}{2} + \Delta_D(d)\\
    &= 2^n-2\binom{|S|}{2} + 2(|S|-2)+6\mult_{S,4}(d) \\
    &\leq 2^n-2\binom{|S|}{2} + 2(|S|-2)+ \frac{(|S|-2)(|S|-8)}{2} \\
    &= 2^n-\frac{|S|^2}{2}-2|S| +4
    \end{align*}
    with the first, second, and third (in)equalities following from \Cref{lem:deltaA-deltaD}, \Cref{lem:deltaD-multS4}, and \Cref{lem:multS4-bound} respectively.
    Combining the above with inequality (\ref{eq:lowerbound-on-maxmults-size2mod3}), we have
    \begin{align*}
       \frac{|S|^3 -|S|^2 -8|S| -2^{n+1}|S| + 5 \cdot 2^{n+1}}{6} 
       \leq |\maxmults{S}|  
       \leq 2^n-\frac{|S|^2}{2}-2|S| +4,
    \end{align*}
    which provides $|S|^3 + 2|S|^2 + 4|S| -2^{n+1} |S| + 2^{n+2} -24 \leq 0$, and this is the same as $(|S|-2)(|S|^2+4|S|+12-2^{n+1}) \leq 0$.
    Since $|S| > 2$, this inequality is equivalent to $|S|^2 + 4|S|+12 \leq 2^{n+1}$. 
    Upon completing the square, we have $(|S|+2)^2 +8 \leq 2^{n+1}$, i.e. $|S| \leq \sqrt{2^{n+1}-8}-2$.

    Now, let us no longer consider when $|S| \equiv 2 \mod 6$ nor $\maxmults{S} \neq \emptyset$, and assume instead that $|S| \geq r_n$.
    Let $S' \subseteq S$ such that $|S'| = r_n$.
    Applying inequality (\ref{eq:lowerbound-on-maxmults-size2mod3}), we have 
    $r_n^3 -r_n^2 -8r_n - 2^{n+1}r_n + 5 \cdot 2^{n+1} \leq 6 |\maxmults{S'}|$, and the left-hand side of this inequality is equal to $(r_n-5)(r_n^2+4r_n-2^{n+1}+12) + 60$.
    Since $n \geq 6$, we have $r_n > \sqrt{2^7-8}-2 > 8$, implying $r_n -5 >0$.
    Moreover, 
    \begin{align*}
        r_n^2 + 4r_n-2^{n+1}+12 &> (\sqrt{2^{n+1}-8}-2)^2 +4(\sqrt{2^{n+1}-8}-2) - 2^{n+1}+12=0.
    \end{align*}
    Hence, $|\maxmults{S'}|>0$, and the first part of the argument applies, providing a contradiction to the existence of $S$.
\end{proof}

\Cref{thm:main} immediately follows. 
Indeed, if $n \geq 7$ and $n \equiv 3 \mod 4$, then $r_n=2^{\frac{n+1}{2}} - 2$ as $2^{\frac{n+1}{2}}-2 \equiv 2 \mod 6$ and $2^{\frac{n+1}{2}}-2 > \sqrt{2^{n+1}-8}-2 > 2^{\frac{n+1}{2}} - 8$.
In particular, when $n \geq 15$ and $n \equiv 3 \mod 4$, this is a strict improvement on the best previously known upper bound on $s_{\max}(\F_2^n)$ \cite{CzerwinskiPottLargeSidon} (note that we do not obtain an improvement when $n \in \set{7,11}$ since it is already known that $s_{\max}(\F_2^7)=12<14=2^{\frac{7+1}{2}}-2$ and $s_{\max}(\F_2^{11})\leq58<62=2^{\frac{11+1}{2}}-2$).

\section{Connections to coding theory and APN functions}\label{sec:cor}

A (binary) \textit{linear code} $C$ is a $k$-dimensional linear subspace in $\F_2^n$.
We call $n$ the \textit{length} of $C$, and we call $k$ the \textit{dimension} of $C$.
The \textit{minimum distance} of $C$ is the integer $d$ satisfying $d = \min_{c \in C \setminus \set{0}} \wt(c)$, where $\wt(c)$ is the \textit{Hamming weight} of $c$ and is equal to the number of nonzero entries of the vector $c$. 
We then say that $C$ is a $[n,k,d]$-code.
In \cite{CzerwinskiPott2024}, the following connection between the existence of particular binary linear codes of minimum distance $5$ and Sidon sets was established.

\begin{corollary}{\cite[Corollary 5.1]{CzerwinskiPott2024}}\label{cor:code-Sidon}
    Let $n,s$ be integers satisfying $s > n \geq 2$.
    There does not exist a $[s,s-n,5]$-code if and only if $s_{\max}(\F_2^n) \leq s$.
\end{corollary}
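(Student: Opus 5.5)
This is an equivalence between codes and Sidon sets, so I will prove it through the standard parity-check correspondence. One direction builds a code from a large Sidon set; the other builds a large Sidon set from a code.

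\emph{Dictionary.} A linear $[s,s-n,5]$-code with $s>n$ has an $n\times s$ parity-check matrix $H$. I may take $H$ of full rank $n$, because the code has dimension exactly $s-n$. The code has minimum distance at least $5$ exactly when every $4$ columns of $H$ are linearly independent. That condition says three things about the columns $h_1,\dots,h_s\in\F_2^n$:
\begin{itemize}
\item they are nonzero and pairwise distinct;
\item no three of them sum to $0$;
\item no four of them sum to $0$.
\end{itemize}
If I adjoin $0$ to the column set, the only conditions that remain are that the points are distinct and that no four distinct points sum to zero. The three-term condition becomes a four-term condition because a triple summing to zero, together with $0$, is four distinct points summing to zero. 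So the columns give a Sidon set $\{0,h_1,\dots,h_s\}$ of size $s+1$. Conversely, translate any Sidon set of size $s+1$ so that it contains $0$; this is allowed because Sidon sets are translation invariant. Its nonzero elements then form $s$ columns, any $4$ of which are independent.

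\emph{Direction 1: a Sidon set of size $s+1$ gives a code.} Suppose $s_{\max}(\F_2^n)\ge s+1$. Take the $s$ columns just described and let $C$ be their kernel. Then $C$ has minimum distance at least $5$, and its dimension is at least $s-n$. To get dimension exactly $s-n$, I need the columns to span $\F_2^n$. If they do not, they lie in a proper subspace, and I would have to pass to a subcode or otherwise adjust. This is the one delicate point, and I will handle it by fixing the convention: I read ``there exists an $[s,s-n,5]$-code'' as ``there exists a code of length $s$, dimension at least $s-n$, and minimum distance at least $5$''. This is the convention implicit in \cite{CzerwinskiPott2024}, and it is monotone: passing to a subcode or shortening preserves it.

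Even with that convention settled, the exact distance $5$ still needs an argument. Suppose the code built from a Sidon set has minimum distance $d_{\min}>5$. I can lower the dimension and length appropriately, or use the fact that distance at least $5$ suffices by monotonicity. I will state plainly that this is the convention adopted.

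\emph{Direction 2: a code gives a Sidon set of size $s+1$.} Given such a code, its parity-check matrix has $s$ columns with any $4$ independent. By the dictionary, these columns together with $0$ form a Sidon set of size $s+1>s$, so $s_{\max}(\F_2^n)>s$. Taking contrapositives of both directions gives the claimed equivalence.

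The main obstacle is the bookkeeping of ``exact'' versus ``at least'' for both the dimension and the distance. I would handle it by fixing the monotone convention explicitly at the start.
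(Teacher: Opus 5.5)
Your parity-check dictionary is correct, and your Direction~2 is complete: an $[s,s-n,5]$-code yields the Sidon set $\{0,h_1,\dots,h_s\}$ of size $s+1$. This parity-check correspondence is the standard argument behind the cited result, and Direction~2 is the direction the paper needs for its new nonexistence result for $[2^t-3,2^t-2t-2,5]$-codes.

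The gap is in Direction~1. The paper defines an $[n,k,d]$-code with $k$ the dimension and $d$ the minimum distance, and both are exact. So you cannot reread the statement as ``dimension at least $s-n$ and distance at least $5$.'' Under that reading, Direction~1 only produces a code with weaker properties than the statement requires, and you never show that the two readings agree. Your proposed remedy for $d_{\min}>5$, ``lower the dimension and length appropriately,'' also fails, because it changes the parameters $[s,s-n]$ you are supposed to hit.

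The missing step is short.
\begin{itemize}
\item The kernel $C$ of your $n\times s$ matrix has length $s$, dimension at least $s-n\ge 1$, and minimum distance at least $5$.
\item Pass to any $(s-n)$-dimensional subspace of $C$. A subcode cannot have smaller minimum distance, so you now have dimension exactly $s-n$ and distance $d\ge 5$.
\item If $d>5$, pick a codeword $c$ of weight $d$ and a coordinate $i$ with $c_i=1$, and puncture at $i$. Since $d\ge 2$, the puncturing map is injective, so the dimension stays $s-n$. Every weight drops by at most one, and $c$ drops to $d-1$, so the new minimum distance is exactly $d-1$.
\item Appending a zero coordinate restores length $s$ without changing the dimension or the distance.
\end{itemize}
Iterating reaches an $[s,s-n,5]$-code. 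With this added, your two contrapositives prove the equivalence under the paper's exact definition.
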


In \cite{Brouwer1993}, Brouwer and Tolhuizen proved that there is no $[2^t-2, 2^t-2t-1,5]$-code for all $t >3$, or equivalently, there is no $[2^{\frac{n+1}{2}}-2, 2^{\frac{n+1}{2}} -n-2, 5]$-code for all odd $n \geq 7$.
Applying \Cref{cor:code-Sidon} to \Cref{thm:main}, we improve the Brouwer-Tolhuizen result for every even $t \geq 4$, and to the best of the our knowledge, this is the first improvement on Brouwer and Tolhuizen's result since its publication in 1993.
\begin{corollary}
    Assume $n \geq 7$ and $n \equiv 3 \mod 4$.
    Then, there is no $[2^{\frac{n+1}{2}}-3,2^{\frac{n+1}{2}}-n-3,5]$-code.
    Equivalently, for any even $t\geq4$, there is no $[2^t-3,2^t-2t-2,5]$-code.
\end{corollary}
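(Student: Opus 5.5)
The plan is to derive the statement directly from \Cref{thm:main} combined with \Cref{cor:code-Sidon}. Set $s = 2^{\frac{n+1}{2}}-3$. We will apply \Cref{cor:code-Sidon} with this $s$ and the given $n$. It asserts that no $[s,s-n,5]$-code exists if and only if $s_{\max}(\F_2^n)\le s$. Note that $s-n = 2^{\frac{n+1}{2}}-n-3$, which is exactly the dimension in the statement.

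The hypotheses of \Cref{cor:code-Sidon} require $s>n\ge 2$. For $n\ge 7$ we have $2^{\frac{n+1}{2}}-3\ge 13>n$ at $n=7$. In general, $2^{\frac{n+1}{2}}$ grows faster than $n+3$, so $s>n$ follows by a trivial induction on $n$, say in steps of $4$. Clearly $n\ge 2$.

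The inequality $s_{\max}(\F_2^n)\le 2^{\frac{n+1}{2}}-3 = s$ is exactly \Cref{thm:main}, which applies under the same hypotheses $n\ge7$ and $n\equiv 3 \bmod 4$. So \Cref{cor:code-Sidon} yields that no $[2^{\frac{n+1}{2}}-3,\,2^{\frac{n+1}{2}}-n-3,\,5]$-code exists.

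For the equivalent formulation, substitute $t=\frac{n+1}{2}$. The conditions $n\ge7$ and $n\equiv3 \bmod 4$ correspond exactly to $t\ge4$ with $t$ even, since $n=2t-1$ and $2t-1\equiv 3 \bmod 4$ holds if and only if $t$ is even. Then $2^{\frac{n+1}{2}}-3=2^t-3$ and $2^{\frac{n+1}{2}}-n-3 = 2^t-(2t-1)-3 = 2^t-2t-2$.

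There is no real obstacle here; the only point requiring care is checking the side condition $s>n$ in \Cref{cor:code-Sidon} and carrying out the reparametrization correctly.
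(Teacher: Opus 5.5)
Your proposal is correct and follows the paper's argument: apply \Cref{cor:code-Sidon} with $s=2^{\frac{n+1}{2}}-3$ to the bound of \Cref{thm:main}, then reparametrize via $n=2t-1$. Your explicit verification of the side condition $s>n$, together with the check that $n\equiv 3 \bmod 4$ corresponds to $t$ even, are details the paper leaves implicit.
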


For a function $F\colon \F_2^n \to \F_2^n$, we say that $F$ is \textit{differentially $\delta$-uniform} if for any nonzero $a \in \F_2^n$ and any $b\in \F_2^n$, the number of solutions to the equation $F(x)+F(x+a)=b$ is at most $\delta$.
If $F$ is differentially 2-uniform, then we say that $F$ is \textit{almost perfect nonlinear} (APN).
It is well-known that $F$ is APN if and only if its \textit{graph} 
$\graph{F}=\set{(x,F(x)) : x \in \F_2^n}$ is a Sidon set in $\F_2^n \times \F_2^n$ (cf. \cite{Carlet_apnGraphMaximal}).

A well-known problem on APN functions is to improve the lower bound on their nonlinearity.
For a Boolean function $f \colon \F_2^n \to \F_2$, we define the \textit{nonlinearity} $\mathcal{NL}(f)$ of $f$ as the minimum Hamming distance between $f$ and an affine function, or equivalently, 
$
\mathcal{NL}(f) = 2^{n-1} - \frac 12 \max_{u \in \F_2^n} |W_f(u)|,
$
where $W_f(u) = \sum_{x\in \F_2^n}(-1)^{f(x) + x \cdot u}$ is the \textit{Walsh transform} of $f$.
The \textit{nonlinearity} of a function $F \colon \F_2^n\to \F_2^n$ is defined to be $\mathcal{NL}(F) = \min_{v \in \F_2^n \setminus \set{0}}\mathcal{NL}(v \cdot F)$, where $(v \cdot F)(x) = v \cdot F(x)$.
The \textit{Walsh transform} of $F$ is defined by $W_F(u,v) =W_{v \cdot F}(u)= \sum_{x\in \F_2^n}(-1)^{u\cdot x+ v\cdot F(x)}$, so the nonlinearity of $F$ is equivalently defined by $\mathcal{NL}(F) = 2^{n-1}-\frac{1}{2}\max_{(u,v) \in \F_2^n \times \F_2^n, v \neq 0} |W_F(u,v)|$.
For any $(u,v) \in(\F_2^n \times \F_2^n)\setminus \set{(0,0)}$, it is clear that 
\begin{equation}\label{eq:WF-intersections}
W_F(u,v) = 2 |\graph{F} \cap \set{(0,0),(u,v)}^\perp| - 2^n=2^n -2|\graph{F} \setminus \set{(0,0),(u,v)}^\perp|.
\end{equation}
Assume that $n \geq 4$ is even and that $F$ is APN.
We then know that $\graph{F}$ cannot intersect a linear hyperplane or its complement in at most $2$ points as $s_{\max}(\F_2^{2n-1}) \leq 2^n-3$ by \Cref{thm:main}.
From relation~(\ref{eq:WF-intersections}), we then know that $|W_F(u,v)| \leq 2^n-6$ for all nonzero $(u,v)$.
This immediately yields the following corollary, strengthening \cite[Corollary 3.7]{CzerwinskiPottLargeSidon}, where it was shown that any APN function in $n \geq 3$ variables has nonlinearity at least $2$.

\begin{corollary}
    Assume $n \geq 4$ is even.
    For any APN function $F \colon \F_2^n \to \F_2^n$, we have $\mathcal{NL}(F) \geq 3$.
\end{corollary}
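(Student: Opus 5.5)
The corollary follows from the discussion just before it together with \Cref{thm:main}. I would argue via relation~(\ref{eq:WF-intersections}).

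Fix an even $n\ge 4$ and an APN function $F$. Then $\graph{F}$ is a Sidon set in $\F_2^{2n}$ of size $2^n$. Take any $(u,v)$ with $v\neq 0$, and let $H=\set{(0,0),(u,v)}^\perp$ be the associated linear hyperplane. Both $\graph{F}\cap H$ and $\graph{F}\setminus H$ are Sidon sets: a subset of a Sidon set is Sidon, and Sidon-ness is invariant under translation.

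Each piece lives in a copy of $\F_2^{2n-1}$. For $\graph{F}\cap H$ this holds because $H$ is a linear subspace of dimension $2n-1$. For $\graph{F}\setminus H$ it holds because that piece lies in the affine coset $H+c$, and translating by $c$ moves it into $H$ while preserving the Sidon property. Since $n$ is even and $n\ge 4$, we have $2n-1\ge 7$ and $2n-1\equiv 3 \bmod 4$. Hence \Cref{thm:main} applies and gives $s_{\max}(\F_2^{2n-1})\le 2^{n}-3$.

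Write $m=|\graph{F}\cap H|$, so $|\graph{F}\setminus H| = 2^n-m$. The bound on each piece gives $m\le 2^n-3$ and $2^n-m\le 2^n-3$, that is, $3\le m\le 2^n-3$. By (\ref{eq:WF-intersections}), $W_F(u,v)=2m-2^n$, so $|W_F(u,v)|\le 2^n-6$ for every $(u,v)$ with $v\ne 0$. Substituting into $\mathcal{NL}(F)=2^{n-1}-\frac12\max_{v\neq0}|W_F(u,v)|$ gives $\mathcal{NL}(F)\ge 2^{n-1}-(2^{n-1}-3)=3$.

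There is no real obstacle here. The only points to check are the parity and size conditions $2n-1\equiv 3 \bmod 4$ and $2n-1\ge 7$, which hold exactly when $n$ is even and $n\ge4$, and the reduction of the complement half to a Sidon set in $\F_2^{2n-1}$ via translation.
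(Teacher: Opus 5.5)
Your proof is correct and follows the paper's argument: you apply \Cref{thm:main} in dimension $2n-1$ to both $\graph{F}\cap H$ and the translated complement, which gives $3\le|\graph{F}\cap H|\le 2^n-3$, and then read off $|W_F(u,v)|\le 2^n-6$ from relation~(\ref{eq:WF-intersections}). Your explicit check of the translation step and of $2n-1\equiv 3 \bmod 4$, $2n-1\ge 7$ only spells out what the paper leaves implicit.
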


\section*{Acknowledgments}
The author thanks Claude Carlet for useful comments that helped to improve on earlier drafts of this paper.
AI assistance was used for manuscript revision, proofreading, grammatical corrections, and checking mathematical arguments.

\bibliographystyle{plain}
\bibliography{references}
\end{document}